\newtheorem{thm}{Theorem}
\newtheorem{cor}{Corollary}
\newtheorem{lem}{Lemma}
\newtheorem{conj}{Conjecture}
\newtheorem{prob}{Problem}
\newtheorem{remark}{Remark}
\theoremstyle{definition}
\newtheorem{defn}{Definition}[section]
\newtheorem{example}{Example}
\newenvironment{pf}[1][]{%
 \vskip 1mm
 \noindent
 \ifthenelse{\equal{#1}{}}%
  {{\slshape Proof. }}%
  {{\slshape #1.} }%
 }%
{\qed\bigskip}
\newcounter{alphabet}
\newcounter{tmp}
\newcommand{\Ref}[1]{\@ifundefined{r@#1}{}{\setcounter{tmp}{\ref{#1}}\Alph{tmp}}}
\newcommand{\IC}{{\mathbb C}}
\newcommand{\ID}{{\mathbb D}}
\def\be{\begin{equation}}
\def\ee{\end{equation}}
\newcommand{\bee}{\begin{enumerate}}
\newcommand{\eee}{\end{enumerate}}
\newcommand{\blem}{\begin{lem}}
\newcommand{\elem}{\end{lem}}
\newcommand{\bthm}{\begin{thm}}
\newcommand{\ethm}{\end{thm}}
\newcommand{\bcor}{\begin{cor}}
\newcommand{\ecor}{\end{cor}}
\newcommand{\beg}{\begin{example}}
\newcommand{\eeg}{\end{example}}
\newcommand{\begs}{\begin{examples}}
\newcommand{\eegs}{\end{examples}}
\newcommand{\bdefe}{\begin{defn}}
\newcommand{\edefe}{\end{defn}}
\newcommand{\bprob}{\begin{prob}}
\newcommand{\eprob}{\end{prob}}
\newcommand{\bques}{\begin{ques}}
\newcommand{\eques}{\end{ques}}
\newcommand{\bei}{\begin{itemize}}
\newcommand{\eei}{\end{itemize}}
\newcommand{\bcon}{\begin{conj}}
\newcommand{\econ}{\end{conj}}
\newcommand{\bcons}{\begin{conjs}}
\newcommand{\econs}{\end{conjs}}
\newcommand{\bprop}{\begin{propo}}
\newcommand{\eprop}{\end{propo}}
\newcommand{\br}{\begin{rem}}
\newcommand{\er}{\end{rem}}
\newcommand{\brs}{\begin{rems}}
\newcommand{\ers}{\end{rems}}
\newcommand{\bo}{\begin{obser}}
\newcommand{\eo}{\end{obser}}
\newcommand{\bos}{\begin{obsers}}
\newcommand{\eos}{\end{obsers}}
\newcommand{\bpf}{\begin{pf}}
\newcommand{\epf}{\end{pf}}
\newcommand{\ba}{\begin{array}}
\newcommand{\ea}{\end{array}}
\newcommand{\beq}{\begin{eqnarray}}
\newcommand{\beqq}{\begin{eqnarray*}}
\newcommand{\eeq}{\end{eqnarray}}
\newcommand{\eeqq}{\end{eqnarray*}}
\newcounter{minutes}\setcounter{minutes}{\time}
\newcounter{hours}\setcounter{hours}{\time}
\begin{document}
\bibliographystyle{amsplain}
\title[Bohr inequality for odd analytic functions]{Bohr inequality for odd analytic functions}

\thanks{
File:~\jobname .tex,
          printed: \number\day-\number\month-\number\year,
          \thehours.\ifnum\theminutes<10{0}\fi\theminutes}


\author{Ilgiz R Kayumov, and Saminathan Ponnusamy }

\address{I. R Kayumov, Kazan Federal University, Kremlevskaya 18, 420 008 Kazan, Russia
}
\email{ikayumov@kpfu.ru }

\address{S. Ponnusamy,
Indian Statistical Institute (ISI), Chennai Centre, SETS (Society
for Electronic Transactions and Security), MGR Knowledge City, CIT
Campus, Taramani, Chennai 600 113, India.
}
\email{samy@isichennai.res.in, samy@iitm.ac.in}

\subjclass[2000]{Primary 30A10, 30H05, 30C35; Secondary 30C45}
\keywords{Analytic functions, $p$-symmetric functions, Bohr's inequality, Schwarz lemma, subordination and odd univalent functions}

\begin{abstract}
We determine the Bohr radius for the class of odd functions $f$ satisfying $|f(z)|\le 1$ for all $|z|<1$, settling the recent conjecture of
 Ali, Barnard and Solynin \cite{AliBarSoly}.
In fact,  we solve this problem in a more general setting. Then we discuss
Bohr's radius for the class of analytic functions $g$, when $g$ is subordinate to a member of the class of odd univalent functions.
\end{abstract}

\thanks{
}

\maketitle
\pagestyle{myheadings}
\markboth{I. R. Kayumov and S. Ponnusamy}{Bohr inequality for odd analytic functions}

\section{Preliminaries and Main Results}
Let $\mathcal A$ denote the space of all functions analytic in the unit disk $\ID :=\{z\in\IC:\, |z|<1\}$ equipped with the topology of
uniform convergence on compact subsets of $\ID$. Then the classical Bohr's inequality \cite{Bohr-14}
states that if a power series $f(z)=\sum_{n=0}^{\infty} a_nz^n$ belongs to $\mathcal A$ and $|f(z)|<1$ for all $z\in \ID$, then
$M_f(r):=\sum_{n=0}^{\infty}|a_n|r^n \leq 1$ for all
$|z|=r\leq 1/3$ and the constant $1/3$ cannot be improved. The constant $r_0=1/3$ is known as Bohr's radius.
Bohr actually obtained the inequality for $r\leq 1/6$, but subsequently later, Wiener, Riesz and Schur, independently established
the sharp inequality for $|z|\leq 1/3$. For a detailed account of the development, we refer to the recent survey article
on this topic \cite{AAPon1} and the references therein.

A variety of results on Bohr's theorem in several complex variables appeared recently. See \cite{Pop} and the references there.
For example, Boas and Khavinson \cite{BoasKhavin-97-4} obtained some multidimensional generalizations of Bohr's theorem
and Aizenberg \cite{Aizen-00-1,Aizen-05-3} extended it for further studies in this topic. Using Bohr's inequality,
Dixon \cite{Dixon-95-7} constructed an example of a Banach algebra that satisfies von Neumann's
inequality but not isomorphic to the algebra of bounded operators on a Hilbert
space. There has been considerable interest after the appearance of the work of Dixon.
Paulsen and Singh extended Bohr's inequality to Banach algebras in \cite{PaulSingh-04-11}.
In \cite{BenDahKha},  the Bohr phenomenon for functions in Hardy spaces is discussed.
In \cite{BaluCQ-2006}, Balasubramanian {\it et al.} extended the Bohr inequality to the setting of Dirichlet series.
For certain other results on the Bohr phenomenon, we refer to \cite{Abu,Abu2, Abu4, AAD2,AizenTark-01-2}. 

The present investigation is motivated by the following problem of Ali, Barnard and Solynin \cite{AliBarSoly}.

\bprob {\rm(\cite{AliBarSoly})\label{KP2-prob1}}
 Find the Bohr radius for the class of odd functions $f$ satisfying $|f(z)|\le 1$ for all $z\in
\mathbb{D}$.
\eprob

In \cite[Lemma 2.2]{AliBarSoly}, it was shown that   $M_f(r)\leq 1$ for all
$|z|=r\leq r_*$,  where $r_*$ is a solution of the equation
$$ 5r^4+4r^3-2r^2-4r+1=0,
$$
which is unique in the interval $1/\sqrt{3}<r<1$. The value of $r_*$  can be calculated in terms of radicals and it is equal to $0.7313\ldots$.

Moreover, in \cite{AliBarSoly}, an example was also given to conclude that the
Bohr radius for the class of odd functions satisfies the
inequalities $r_*\le r\le r^*\approx 0.789991 $, where
\begin{equation} \label{TrueNumber}
 r^*= \frac{1}{4}\sqrt{\frac{B-2}{6}}+\frac{1}{2}
\sqrt{3\sqrt{\frac{6}{B-2}}-\frac{B}{24}-\frac{1}{6}},
\end{equation}
with
$$B=(3601-192\sqrt{327})^{\frac{1}{3}}+(3601+192\sqrt{327})^{\frac{1}{3}}.
$$
One of the aims of this article is to solve this conjecture in a more general form.
Namely, we are going to solve analogous problem for $p$--symmetric functions of the form
$f(z)=z\sum_{k=0}^{\infty} a_{pk+1}z^{pk}$.
We should remark that $p$--symmetric property in case $p>1$
brings serious difficulties because if we use sharp inequalities
$|a_n| \leq 1-|a_0|^2$ $(n\geq 1)$
simultaneously (as in classical case) then we will not obtain the
sharp result due to the reason that in the extremal case  $|a_0|<1$.
Also it is important that in the classical case there is no extremal
function while in our case there is. We now state our main results and their corollaries. The proofs
of the main results will be presented in Section \ref{KP2-sec2}.


\begin{thm}\label{KP2-th3}
Let $p \in \mathbb{N}$,  $f(z)$ be analytic and $p$--symmetric in $\ID$ such that $|f(z)| \leq 1$ in $\ID$. Then
$$M_f(r) \leq 1 \mbox{ for } r \leq r_p,
$$
where $r_p$ is the maximal positive root of the equation
$$-6 r^{p-1} + r^{2(p-1)} + 8 r^{2p} +1= 0
$$
in $(0,1)$. The extremal function has the form $z(z^p-a)/(1-az^p)$, where
$$a=\left (1-\frac{\sqrt{1-{r_p}^{2p}}}{\sqrt{2}}\right )\frac{1}{{r_p}^p }.
$$
\end{thm}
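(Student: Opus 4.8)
The plan is to remove the $p$--symmetry by passing to a single auxiliary function. Writing $f(z)=z\,\phi(z^p)$ with $\phi(w)=\sum_{k=0}^\infty a_{pk+1}w^k$, I note that $f(0)=0$ and $|f|\le 1$, so the Schwarz lemma gives $|f(z)|\le |z|$ and hence $|\phi(z^p)|=|f(z)|/|z|\le 1$; since $z\mapsto z^p$ maps $\ID$ onto $\ID$, the function $\phi$ is a Schur function ($|\phi|\le 1$ in $\ID$). Putting $b_k=a_{pk+1}$ and $s=r^p$, the Bohr sum collapses to \[ M_f(r)=r\sum_{k=0}^\infty |b_k|\,s^k, \] so the entire task is to find the sharp bound for $\Lambda(\phi):=\sum_{k=0}^\infty |b_k|s^k$ over all Schur functions $\phi$ and then to solve $r\,\Lambda_{\max}=1$.

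Second, I would locate the candidate extremal and the resulting equation. The degree--one Blaschke factor $\phi(w)=\frac{w-a}{1-aw}$ has $|b_0|=a$ and $|b_k|=(1-a^2)a^{k-1}$ $(k\ge1)$, giving \[ \Lambda(\phi)=a+\frac{(1-a^2)s}{1-as}=:G(a). \] A direct calculation yields $\max_{0\le a\le1}G(a)=\dfrac{3-2\sqrt2\,\sqrt{1-s^2}}{s}$, attained at $a^*=\frac1s\big(1-\frac{\sqrt{1-s^2}}{\sqrt2}\big)$ (this lies in $[0,1]$ exactly when $s\ge 1/3$, a condition I would verify at $r=r_p$). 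Imposing $r\,G(a^*)=1$ gives $3-r^{p-1}=2\sqrt2\,\sqrt{1-r^{2p}}$; squaring produces $1-6r^{p-1}+r^{2(p-1)}+8r^{2p}=0$, which is precisely the defining equation of $r_p$. Feeding $s=r_p^{\,p}$ back into $a^*$ reproduces the constant $a$ in the statement, and transporting $\phi$ through $f(z)=z\phi(z^p)$ gives the claimed extremal $z(z^p-a)/(1-az^p)$ with $M_f(r_p)=1$; this both fixes $r_p$ and shows it cannot be enlarged.

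It is worth stressing why a crude argument does not suffice: the naive coefficient bounds $|b_k|\le 1-a^2$ only yield $\Lambda(\phi)\le a+(1-a^2)\frac{s}{1-s}$, whose maximization leads to the Ali--Barnard--Solynin equation $5r^4+4r^3-2r^2-4r+1=0$ and hence only to the non--sharp $r_*$; and even treating $b_1$ separately and applying Cauchy--Schwarz to the tail via $\sum_{k\ge1}|b_k|^2\le 1-a^2$ gives only $\Lambda(\phi)\le (1-s^2)^{-1/2}$, again strictly weaker than $G(a^*)$. The reason is exactly the one flagged in the introduction: the extremal has $|b_0|=a^*<1$ with geometrically decaying coefficients, so the box bounds cannot all be tight at once.

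The hard part, therefore, is the sharp global estimate $\Lambda(\phi)\le G(a^*)$ for every Schur $\phi$. I would emphasize that this cannot be reduced to a pointwise--in--$|b_0|$ inequality: when $|b_0|=0$ one can already make $\Lambda(\phi)$ exceed $G(0)=s$ (e.g. $\phi(w)=w\frac{w-c}{1-cw}$), so only the global maximum $G(a^*)$ is the correct majorant. To establish it I would exploit that $\Lambda$ is convex on the compact convex unit ball of $H^\infty$ and try to reduce, via a normal--families and extreme--point argument, to inner functions (by de Leeuw--Rudin) and then to finite Blaschke products; parametrizing these by their Schur parameters $\gamma_0,\gamma_1,\ldots$, the problem becomes to show that $\Lambda$ is maximized when $\gamma_0=a^*$ and $|\gamma_1|=1$, which forces $\phi$ to be the degree--one Blaschke factor and renders the remaining parameters inert. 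Proving this monotonicity in the higher Schur parameters is the genuine obstacle; the clean cancellations in the two verifications above give me confidence that $G(a^*)$ is the true value, but turning the extreme--point reduction into a rigorous optimality proof for the degree--one factor is where the real work lies.
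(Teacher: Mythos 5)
Your setup (writing $f(z)=z\phi(z^p)$ with $\phi$ a Schur function), your identification of the degree-one Blaschke factor as the extremal, and your verification that $r\,G(a^*)=1$ reproduces the defining equation of $r_p$ all agree with the paper; this part correctly establishes sharpness. But the substance of the theorem is the upper bound: that $\Lambda(\phi)=\sum_{k\ge 0}|b_k|s^k\le G(a^*)=1/r_p$ (with $s=r_p^{\,p}$) holds for \emph{every} Schur function $\phi$, and this you explicitly leave unproven, offering only a programmatic sketch that you yourself describe as ``where the real work lies''. The sketch would also run into trouble as stated: Bauer's maximum principle reduces a convex maximization to the extreme points of the unit ball of $H^\infty$, but these are far from being only the inner functions (they are exactly the $\phi$ with $\int_0^{2\pi}\log\left(1-|\phi(e^{i\theta})|\right)d\theta=-\infty$), and even granting a reduction to finite Blaschke products (say via Schur's description of coefficient bodies applied to truncations of $\Lambda$), no argument is offered for why the degree-one factor is optimal. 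So there is a genuine gap: the inequality $M_f(r_p)\le 1$ is proved for no $f$ outside the conjectured extremal family.

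The paper closes exactly this gap with an elementary two-regime argument that avoids the global extremal problem you pose. Lemma \ref{KP2-lem2} (obtained from the subordination $\phi\prec(a-z)/(1-\overline{a}z)$, $a=|b_0|$) gives the sharp quadratic bound $\sum_{k\ge1}|b_k|^2R^{pk}\le R^p(1-a^2)^2/(1-a^2R^p)$; Cauchy--Schwarz with a free weight $\rho$ (any $\rho>1$ with $\rho r\le 1$) then yields \eqref{Additional}. For $a\ge r^p$ the choice $\rho^p=1/a$ gives precisely your pointwise bound $\Lambda\le G(a)$, whose maximum over $a$ is $G(a^*)=1/r_p$; for $a<r^p$ the choice $\rho=1/r$ gives $\Lambda\le a+r^p\sqrt{1-a^2}/\sqrt{1-r^{2p}}\le 2r_p^{\,p}$ (the interior critical point $a=\sqrt{1-r^{2p}}$ falls outside this range), and Lemma \ref{KP2-lem1}, i.e.\ $2r_p^{\,p+1}\le 1$, finishes. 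Note this also resolves your objection that the estimate ``cannot be reduced to a pointwise-in-$|b_0|$ inequality'': your observation that $G(|b_0|)$ fails as a majorant for small $|b_0|$ is correct, but one does not need the exact maximum of $\Lambda$ at each fixed $|b_0|$ --- only some bound $\le 1/r_p$ --- and in the regime $|b_0|<r_p^{\,p}$ (which contains your $b_0=0$ example) the cruder bound $2r_p^{\,p}$ suffices. This weighted Cauchy--Schwarz plus case analysis is the missing idea needed to turn your proposal into a proof.
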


The result for $p=1$ is well known with $r_1=1/\sqrt{2}$.
%

The case $p=2$ has a special interest since it provides a solution to Problem \ref{KP2-prob1}.

\begin{cor}\label{KP2-cor1}
If $f(z)$ is odd analytic in $\ID$ and $|f(z)| \leq 1$ in $\ID$, then
$$M_f(r) \leq 1 \mbox{ for } r \leq r_2=0.789991\ldots ,
$$
where $r_2=r^*$  coincides with \eqref{TrueNumber}.
The extremal function has the form $z(z^2-a)/(1-az^2)$.
\end{cor}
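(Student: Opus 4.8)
The plan is to obtain the corollary as the instance $p=2$ of Theorem~\ref{KP2-th3}. An analytic function is odd precisely when it is $2$-symmetric, i.e. of the form $f(z)=z\sum_{k=0}^{\infty}a_{2k+1}z^{2k}$, so Theorem~\ref{KP2-th3} applies verbatim: it gives $M_f(r)\le 1$ for $r\le r_2$ with extremal function $z(z^2-a)/(1-az^2)$, and the latter assertion requires no further argument. Setting $p=2$ in the defining equation $-6r^{p-1}+r^{2(p-1)}+8r^{2p}+1=0$ collapses it to the quartic
\begin{equation}\label{KP2-cor-quartic}
8r^{4}+r^{2}-6r+1=0 .
\end{equation}
Thus the entire task reduces to verifying that the maximal positive root of \eqref{KP2-cor-quartic} lying in $(0,1)$ is exactly the number $r^{*}$ given by \eqref{TrueNumber}.

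First I would pin down the roots qualitatively. Writing $q(r)=8r^{4}+r^{2}-6r+1$, one has $q(0)=1>0$ and $q(1)=4>0$, while $q'(r)=32r^{3}+2r-6$ is strictly increasing with $q'(0)<0<q'(1)$; hence $q$ has a single interior critical point, a minimum, and since $q(r)>0$ for $r\le 0$ and $q(0.2)<0$, the polynomial \eqref{KP2-cor-quartic} has exactly two real roots, both in $(0,1)$. The larger of these is the candidate $r_2\approx 0.789991$, which confirms it is the maximal positive root demanded by Theorem~\ref{KP2-th3}.

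It remains to match this root with the closed form \eqref{TrueNumber}. As \eqref{KP2-cor-quartic} has no cubic term, Ferrari's method applies cleanly: I would seek a factorisation $8r^4+r^2-6r+1=8(r^2+\alpha r+\beta)(r^2-\alpha r+\gamma)$, which forces $u=\alpha^2$ to satisfy the resolvent cubic; after depressing and rescaling this becomes $t^{3}-291t-7202=0$, whose real root is furnished by Cardano's formula as $B$, the identity $3601^{2}-192^{2}\cdot 327=97^{3}$ being precisely what lets the two cube roots combine into the stated form. Tracking the substitutions yields $\alpha^{2}=(B-2)/24$, and the larger root of the quadratic factor $r^2-\alpha r+\gamma$, namely $\tfrac{\alpha}{2}+\tfrac12\sqrt{\alpha^{2}-4\gamma}$, simplifies after elimination of $\gamma$ to exactly the expression \eqref{TrueNumber} (the radicand $\alpha^{2}-4\gamma$ becoming $3\sqrt{6/(B-2)}-B/24-1/6$). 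The main obstacle is this last bookkeeping: selecting the quadratic factor and sign that produce the real root in $(0,1)$ and then reducing the nested radicals to the precise form \eqref{TrueNumber}. A shortcut that bypasses the derivation is to substitute \eqref{TrueNumber} directly into \eqref{KP2-cor-quartic} and check that it vanishes; together with the sign analysis of the previous paragraph this identifies it unambiguously as the sought maximal root $r_2$.
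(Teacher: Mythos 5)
Your proposal is correct and follows the same route as the paper: Corollary \ref{KP2-cor1} is obtained there simply by specializing Theorem \ref{KP2-th3} to $p=2$ (odd $=$ $2$-symmetric), which is exactly your argument, and the quartic $8r^4+r^2-6r+1=0$ is the right specialization of the defining equation. The only difference is that you additionally carry out the Ferrari--Cardano verification that the maximal positive root of this quartic coincides with the closed form \eqref{TrueNumber} --- your resolvent value $\alpha^2=(B-2)/24$, the identity $3601^2-192^2\cdot 327=97^3$, and the radicand $3\sqrt{6/(B-2)}-B/24-1/6$ all check out --- whereas the paper asserts the identification $r_2=r^*$ without proof, in effect deferring the computation to \cite{AliBarSoly}.
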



To state our next result, we  need to introduce the notion of subordination. Let $f,g\in {\mathcal A}$. Then $g$ is {\it subordinate} to $f$,
written $g\prec f$ or $g(z)\prec f(z)$, if there exists a
$w\in {\mathcal A}$ satisfying $w(0)=0$, $|w(z)|<1$ and
$g(z)=f(w(z))$ for $z\in \ID$.  In the case when $f$ is univalent in $\mathbb{D}$,  $g\prec f$ if and only
if $g(0)=f(0)$ and $g(\ID )\subset f(\ID )$ (see \cite[Chapter 2]{AvWir-09} and \cite[ p.~190 and p.~253]{DurenUniv-83-8}).
By the Schwarz lemma, it follows that
$$|g'(0)|= |f'(w(0))w'(0)| \leq |f'(0)|.
$$
For  important discussions on the Schwarz lemma and its various consequences, we refer to \cite{AvWir-09}.

Now for a given $f$, let $S(f)=\{g:\, g\prec f\}$. In \cite[Theorem 1]{Abu}, it was shown that if $f,g\in {\mathcal A}$
such that $f$ is univalent in $\ID$ and $g\in S(f)$, then the inequality $M_g(r) \leq 1 $
holds with $r_f=3-2\sqrt{2}\approx 0.17157$. The sharpness of $r_f$ is shown by the Koebe function $f(z)=z/(1-z)^2.$
Our next result concerns Bohr's radius for the space of subordinations when the subordinating function is odd and univalent in $\ID$.
In this case, the Bohr radius is much larger and the proof in this case is completely different.
Unlike the earlier case \cite[Theorem 1]{Abu} where the proof requires coefficient estimation, for odd univalent
function $f(z)=\sum_{k=1}^{\infty} a_{2k-1}z^{2k-1}$, sharp bound for $a_{2k-1}$ is still unknown. Even if we use the known
coefficient bound for odd univalent functions, we do not get better bound for the Bohr radius.


\begin{thm}\label{subtheo}
If $f,g$ are analytic in $\ID$ such that $f(z)=z+\sum_{k=2}^{\infty} a_{2k-1}z^{2k-1}$ is odd univalent in $\ID$ and
$g(z)=\sum_{n=1}^{\infty} b_nz^n\in S(f)$, then $M_g(r)\leq 1$
holds for $r \leq r_*$, where $r_*=0.554958$ is the minimal positive root of the equation
$$
x^2=(1-x)^2(1+x).
$$
\end{thm}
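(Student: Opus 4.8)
The plan is to bound the coefficients $b_n$ of the subordinate function $g$ in terms of the geometry of the univalent image $f(\ID)$, then show that the resulting majorant series $M_g(r)=\sum_{n\ge1}|b_n|r^n$ is dominated by a quantity controlled by the growth of $f$, which we can estimate sharply using the classical theory of odd univalent functions. The key structural fact is that since $f$ is \emph{odd} and univalent, the function $F(z)=f(\sqrt{z})^2 = \sqrt{z}\,\sqrt{z}\,(\cdots)$ is a univalent function in the usual normalized class $\mathcal{S}$ (this is the standard square-root transform that relates odd univalent functions to $\mathcal S$). This lets me import growth and coefficient information from $\mathcal S$ without needing the still-unknown sharp bounds on $a_{2k-1}$ directly.

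\medskip

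\noindent
\textbf{Main estimate.} Since $g\prec f$ with $f$ univalent, I would first record that $g(\ID)\subset f(\ID)$ and $g(0)=0$, so $\dist(0,\partial f(\ID))$ controls how large $g$ can be. The crucial inequality is a Schwarz--Pick / subordination bound for the coefficients: writing $g=f\circ w$ with $w$ a Schwarz function, one obtains $|b_n|\le$ (growth factor of $f$) via the area theorem or the Koebe-type distortion for the odd class. The natural sharp input here is that for $f$ odd univalent, $|f(z)|$ on $|z|=\rho$ is controlled by the odd Koebe function $k_{\mathrm{odd}}(z)=z/(1-z^2)$, giving $|b_n| \le \tfrac{1}{1-\rho^2}$-type estimates after optimizing over an auxiliary radius $\rho$. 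Summing the majorant then produces an expression whose comparison with $1$ reduces, after collecting terms, to the inequality $x^2 \le (1-x)^2(1+x)$ with $x=r$. The value $r_*=0.554958$ is exactly the minimal positive root of $x^2=(1-x)^2(1+x)$, so the target is to show the majorant is $\le 1$ precisely up to this root.

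\medskip

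\noindent
\textbf{Key steps, in order.} First I would set $g=f\circ w$ and expand, relating the $b_n$ to the Taylor coefficients of $f$ and the Schwarz function $w$; using $|w(z)|\le|z|$ I reduce everything to estimates on $f$ alone. Second, I would invoke the odd-univalent growth bound (equivalently, pass to the square-root transform and apply the classical $\mathcal S$ distortion/growth theorem) to bound $\sum |a_{2k-1}|\rho^{2k-1}$ and hence the relevant weighted sums. Third, I would assemble $M_g(r)$ as a sum of a ``linear part'' $|b_1|r \le r$ and a ``tail'' controlled by a geometric-type series; the tail contributes the factor that, after clearing denominators, yields $(1-x)^2(1+x)$ on one side and $x^2$ on the other. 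Fourth, I would verify that the transition occurs exactly at the minimal positive root of $x^2=(1-x)^2(1+x)$ and confirm via monotonicity that $M_g(r)\le 1$ holds for all $r\le r_*$.

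\medskip

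\noindent
\textbf{The hard part} will be step two: obtaining a bound on the relevant coefficient sums for odd univalent $f$ that is strong enough to give the \emph{sharp} radius $r_*$, despite the sharp individual bounds on $a_{2k-1}$ being unknown. The excerpt explicitly flags this obstacle, noting that even the best available coefficient bounds for odd univalent functions do not improve the Bohr radius. The resolution I anticipate is to avoid individual coefficient bounds entirely and instead work with the \emph{growth theorem} for the odd class (or equivalently the pointwise majorization $|f(z)|\le \rho/(1-\rho^2)$ together with the containment $g(\ID)\subset f(\ID)$), which sidesteps the unknown coefficients and feeds directly into the clean algebraic equation $x^2=(1-x)^2(1+x)$ defining $r_*$.
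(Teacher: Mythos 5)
Your outline stalls exactly at the step you flag as the hard part, and the resolution you anticipate does not work. Replacing coefficient information by the pointwise growth bound $|f(z)|\le \rho/(1-\rho^2)$ (via the square-root transform and the growth theorem for $\mathcal S$) and then extracting bounds on the $b_n$ from $|g(z)|\le |f(w(z))|$ leads, by Cauchy's estimates, to
\begin{equation*}
|b_n|\;\le\;\inf_{0<\rho<1}\frac{1}{(1-\rho^2)\rho^{n-1}}\;=\;\frac{n+1}{2}\Bigl(\frac{n+1}{n-1}\Bigr)^{(n-1)/2}\;\sim\;\frac{e}{2}\,n ,
\end{equation*}
and with coefficient bounds growing linearly in $n$ the majorant series $\sum_n|b_n|r^n$ exceeds $1$ already around $r\approx 0.34$; it is nowhere near $\le 1$ at $r=0.554958$. (For comparison, subordination actually forces $|b_1|^2+|b_2|^2\le 1$, so $|b_2|\le 1$, while your route gives only $|b_2|\le 3\sqrt{3}/2\approx 2.6$.) Nothing in your plan produces the factor $\sqrt{1+r}$ in the critical bound $r/((1-r)\sqrt{1+r})\le 1$, which is equivalent to $r^2\le(1-r)^2(1+r)$; that factor is the signature of a Cauchy--Schwarz split, not of a geometric tail, so asserting that "collecting terms" yields the target equation is precisely the part of the proof that is missing.

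The idea you are missing is that one does not need sharp \emph{individual} bounds on the $a_{2k-1}$, because a sharp \emph{mean-square} bound is known: Robertson's inequality $\sum_{k=1}^n|a_{2k-1}|^2\le n$ for odd univalent $f$. The paper's proof runs as follows: (i) Cauchy--Schwarz on partial sums gives $\sum_{k=1}^n|a_{2k-1}|\le n$, and Abel summation then yields $\sum_k|a_{2k-1}|r^{2k-1}\le r/(1-r^2)$, and likewise $\sum_k|a_{2k-1}|^2r^{2(2k-1)}\le r^2/(1-r^4)$; (ii) the $L^2$ form of the subordination principle transfers this to $g$ at the level of means on circles, $\sum_k|b_k|^2r^{2k}\le\sum_k|a_{2k-1}|^2r^{2(2k-1)}$, whence $\sum_k|b_k|^2r^{k}\le r/(1-r^2)$; (iii) Cauchy--Schwarz between $\sum_k|b_k|^2r^k$ and $\sum_k r^k$ gives $\sum_k|b_k|r^k\le r/((1-r)\sqrt{1+r})$, which is $\le 1$ exactly when $r^2\le(1-r)^2(1+r)$. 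In particular your step of expanding $g=f\circ w$ coefficientwise is never needed, and the transfer from $f$ to $g$ must happen in $\ell^2$, not pointwise, to retain enough sharpness to reach $r_*$.
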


If $g$ in Theorem \ref{subtheo} is also odd analytic, then one can easily obtain the sharp value of the Bohr radius in Theorem \ref{subtheo}
(see Remark \ref{KP2-rem2}). We conclude the section with the following problem.

\bprob \label{KP2-prob2}
Find the Bohr radius for the class of odd functions $f$ satisfying $0<|f(z)|\le 1$ for all $0<|z|<1$.
\eprob

\section{Proofs of Theorems \ref{KP2-th3} and \ref{subtheo}, and Remarks}\label{KP2-sec2}

For the proof of Theorem \ref{KP2-th3}, we need the following lemmas.

\begin{lem}\label{KP2-lem1}
If $r_p$ is the maximal positive root of the equation
$$8r^{2p}+r^{2(p-1)}-6r^{p-1}+1= 0,
$$
then $2{r_p}^{p+1} \leq 1$.
\end{lem}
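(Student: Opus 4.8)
The plan is to turn the defining equation into a \emph{linear} relation in the single quantity $r_p^{\,p+1}$ and then close the estimate with the arithmetic--geometric mean inequality. The key observation is the exponent identity $2p=(p-1)+(p+1)$, which lets me express every term through the two quantities $a:=r_p^{\,p-1}$ and $b:=r_p^{\,p+1}$. Indeed $r_p^{2p}=r_p^{\,p-1}r_p^{\,p+1}=ab$ and $r_p^{2(p-1)}=a^2$, so the equation
$$8r_p^{2p}+r_p^{2(p-1)}-6r_p^{\,p-1}+1=0$$
becomes simply $8ab+a^2-6a+1=0$, which is first degree in the target quantity $b$.

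Next, since $r_p>0$ we have $a>0$, so I would divide by $8a$ and solve for $b$, obtaining
$$b=\frac{6a-a^2-1}{8a}=\frac34-\frac18\Big(a+\frac1a\Big).$$
Applying AM--GM in the form $a+1/a\ge 2$ then yields $b\le \tfrac34-\tfrac14=\tfrac12$; that is, $2r_p^{\,p+1}=2b\le 1$, which is exactly the assertion.

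I expect essentially no computational obstacle: the whole argument is the substitution followed by a single use of AM--GM. The one thing to \emph{notice} is that, although the equation superficially involves three distinct powers of $r_p$, the identity $2p=(p-1)+(p+1)$ collapses it to a linear equation in $b=r_p^{\,p+1}$; once this is seen, the conclusion is forced. It is worth remarking that the argument uses only the positivity of the root, not its maximality, so in fact every positive root $r$ of the equation satisfies $2r^{\,p+1}\le 1$. Moreover, since $0<r_p<1$ and $p\ge1$ give $a=r_p^{\,p-1}\in(0,1]$, equality in AM--GM (hence in the lemma) occurs precisely when $a=1$, i.e. in the case $p=1$ with $r_1=1/\sqrt2$.
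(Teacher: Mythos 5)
Your proof is correct, and it takes a genuinely different route from the paper's. The paper multiplies the defining equation by $r_p^2$ and views it as a \emph{quadratic} in $y=r_p^{p+1}$, namely $(8+1/r_p^2)y^2-6y+r_p^2=0$; it then solves by the quadratic formula, getting $y\le\bigl(3+2\sqrt{2}\sqrt{1-r_p^2}\bigr)/(8+1/r_p^2)$, and finishes by asserting that
$$\sup_{r\in(0,1]}\frac{6+4\sqrt{2}\sqrt{1-r^2}}{8+1/r^2}=1,$$
a fact it does not verify (it amounts, after setting $t=r^2$ and squaring, to the factorization $32t^3-28t^2+4t+1=(2t-1)^2(8t+1)\ge 0$). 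You instead exploit the exponent identity $2p=(p-1)+(p+1)$ to write the equation as $8ab+a^2-6a+1=0$ with $a=r_p^{p-1}$, $b=r_p^{p+1}$, which is \emph{linear} in the target quantity $b$; solving exactly gives $b=\tfrac34-\tfrac18\bigl(a+\tfrac1a\bigr)$, and AM--GM closes the argument. Your version buys several things: it avoids the quadratic formula and the unproved supremum computation entirely, so it is self-contained; it shows the conclusion holds for \emph{every} positive root, not just the maximal one (though the paper's argument also never uses maximality); and it makes the equality case transparent ($a=1$, i.e.\ $p=1$ and $r_1=1/\sqrt2$, consistent with the paper's remark following Theorem 1). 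The paper's computation, by contrast, keeps the dependence on $r_p$ explicit in a form ($3\pm2\sqrt2\sqrt{1-r_p^2}$) that mirrors the expression for the extremal value $a$ appearing in Theorem 1, which is a minor expository advantage, but as a proof of the lemma yours is cleaner and complete.
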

\begin{proof}  Let $y=r_p^{p+1}$. Then we have a quadratic equation:
$$
(8 +1/r_p^2)y^2-6y+r_p^2=0
$$
which has two solutions
$$
y=\frac{3 \pm 2\sqrt{2}\sqrt{1-r_p^2}}{8 +1/r_p^2} \leq \frac{3 +2\sqrt{2}\sqrt{1-r_p^2}}{8 +1/r_p^2}.
$$
Consequently,
$$2r_p^{p+1}=2y \leq \frac{6 +4\sqrt{2}\sqrt{1-r_p^2}}{8 +1/r_p^2} \leq \sup_{r \in (0,1]}\frac{6 +4\sqrt{2}\sqrt{1-r^2}}{8 +1/r^2}=1,
$$
which completes the proof of Lemma \ref{KP2-lem1}.
\end{proof}

\begin{lem}\label{KP2-lem2}
 Let $|a|<1$ and $0 < R \leq 1$. If $g(z)=\sum_{k=0}^{\infty} b_kz^k$ is analytic and satisfies the inequality $|g(z)| \leq 1$ in $\ID$, then
the following sharp inequality holds:
\begin{equation}\label{KP2-eq3}
\sum_{k=1}^\infty |b_k|^2R^{pk} \leq R^{p}\frac{(1-|b_0|^2)^2}{1-|b_0|^2R^{p}}.
\end{equation}
\end{lem}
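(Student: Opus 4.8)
The plan is to recognise the right-hand side of \eqref{KP2-eq3} as the exact $L^2$ integral mean of the extremal disk automorphism and to reach it by a single application of the classical subordination theorem of Littlewood on integral means. First I would record the trivial reductions: if $g$ is constant then $b_k=0$ for all $k\ge 1$ and there is nothing to prove, while if $g$ is non-constant the maximum principle gives $|g(z)|<1$ on $\ID$, so in particular $|b_0|<1$. Writing $\rho:=R^{p/2}\in(0,1]$ we have $R^{pk}=\rho^{2k}$ and $R^{p}=\rho^{2}$, so that \eqref{KP2-eq3} is equivalent to
\[
\sum_{k=1}^{\infty}|b_k|^2\rho^{2k}\le \frac{\rho^2(1-|b_0|^2)^2}{1-|b_0|^2\rho^2}.
\]
The device that makes everything sharp is the disk automorphism $\nu(w)=\dfrac{b_0+w}{1+\overline{b_0}\,w}$, which satisfies $\nu(0)=b_0$.

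Next I would exhibit $g$ as a subordinate of $\nu$. Since $\nu^{-1}(\zeta)=\dfrac{\zeta-b_0}{1-\overline{b_0}\,\zeta}$ is again an automorphism of $\ID$, the function $w_0:=\nu^{-1}\circ g$ maps $\ID$ into $\ID$ and satisfies $w_0(0)=\nu^{-1}(b_0)=0$; thus $w_0$ is a Schwarz function and $g=\nu\circ w_0$, i.e. $g\prec\nu$ in the sense defined above (this is exactly the Schwarz--Pick normalisation). Littlewood's subordination theorem then yields, for every $\rho\in(0,1)$,
\[
\frac{1}{2\pi}\int_0^{2\pi}\bigl|g(\rho e^{i\theta})\bigr|^2\,d\theta\le \frac{1}{2\pi}\int_0^{2\pi}\bigl|\nu(\rho e^{i\theta})\bigr|^2\,d\theta,
\]
and the case $\rho=1$ follows by letting $\rho\uparrow 1$, since both sides are non-decreasing in $\rho$. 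By Parseval's identity the left-hand side equals $\sum_{k\ge0}|b_k|^2\rho^{2k}$. For the right-hand side I would expand $\nu(w)=b_0+\sum_{k\ge1}(1-|b_0|^2)(-\overline{b_0})^{k-1}w^k$, whence its $L^2$ mean is $|b_0|^2+(1-|b_0|^2)^2\sum_{k\ge1}|b_0|^{2(k-1)}\rho^{2k}=|b_0|^2+(1-|b_0|^2)^2\,\dfrac{\rho^2}{1-|b_0|^2\rho^2}$. The two $k=0$ terms are both $|b_0|^2$ and cancel, leaving precisely the displayed inequality, hence \eqref{KP2-eq3}. Sharpness is immediate: taking $w_0(z)=z$, that is $g=\nu$, turns every step into an equality, and for this $g$ one has $|b_k|^2=(1-|b_0|^2)^2|b_0|^{2(k-1)}$, which saturates \eqref{KP2-eq3}.

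The point I expect to be delicate is the choice of method rather than any single computation. The naive route---expanding $g-b_0$ and estimating $|b_k|$ by the pointwise Schwarz--Pick bound, or bounding the integrand by its maximum modulus on the circle---is genuinely too weak: such estimates lose exactly the gap between the crude $\bigl(\rho/(1-|b_0|\rho)\bigr)^2$ and the sharp $\rho^2/(1-|b_0|^2\rho^2)$, because they discard the mutual orthogonality of the Taylor monomials. One is therefore forced to work with the full $L^2$ mean, where that orthogonality is retained automatically. Packaging it as the subordination $g\prec\nu$ and invoking Littlewood's theorem is precisely what converts the estimate into the one-line coefficient computation above; identifying $\nu$ as the correct comparison map---equivalently, recognising that the extremal is a disk automorphism---is the only genuine idea the argument needs.
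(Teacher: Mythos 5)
Your proof is correct and follows essentially the same route as the paper: both exhibit $g$ as subordinate to the disk automorphism sending $0$ to $b_0$ (the paper uses $\phi(z)=(a-z)/(1-\overline{a}z)$, which is just a rotation of your $\nu$) and then compare $L^2$ coefficient means via Littlewood's subordination theorem, cancelling the common $k=0$ term. Your explicit handling of the constant case, the limit $\rho\uparrow 1$, and the sharpness statement only spells out what the paper leaves implicit.
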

\begin{proof}
Let $b_0=a$. Then, it is easy to see that the condition on $g$ can be rewritten in terms of subordination as
\begin{equation}\label{KP2-eq1}
g(z) = \sum_{k=0}^{\infty} b_kz^k \prec \phi (z),
\end{equation}
where
$$\phi (z)=\frac{a-z}{1-\overline{a}z}=a-(1-|a|^2)\sum_{k=1}^{\infty}(\overline{a})^{k-1} z^{k}, \quad z\in\ID.
$$
Note that $\phi$ is analytic in $\ID$ and $|\phi (z)|\leq 1$ for $z\in\ID$. The subordination relation \eqref{KP2-eq1} gives
$$\sum_{k=1}^\infty |b_k|^2R^{2k} \leq (1-|a|^2)^2\sum_{k=1}^{\infty}|a|^{2(k-1)} R^{2k}
 =R^2 \frac{(1-|a|^2)^2}{1-|a|^2R^2}
$$
from which we arrive at the inequality \eqref{KP2-eq3} which proves Lemma \ref{KP2-lem2}.
\end{proof}

\begin{proof}[Proof of Theorem \ref{KP2-th3}]
Let $r=r_p$ and $f(z)=\sum_{k=0}^{\infty} a_{pk+1} z^{pk+1}$, where $|f(z)|\le 1$ for $z\in\ID$.
At first, we remark that the function $f$ can be represented as $f(z)=z g(z^p)$, where $|g(z)| \leq 1$ in $\ID$ and
$g(z)=\sum_{k=0}^{\infty} b_kz^k$ is analytic in $\ID$  with $b_k=a_{pk+1}$.  Let $|b_0|=a$. Choose any $\rho >1$ such that $\rho r \leq 1$. Then it follows that
\begin{eqnarray*}
\sum_{k=1}^{\infty} |a_{pk+1}|r^{pk} &=& \sum_{k=1}^{\infty}|b_k| r^{pk}  \\
&\leq & \sqrt{\sum_{k=1}^{\infty}|b_k|^2 \rho^{pk}r^{pk}} \sqrt{\sum_{k=1}^{\infty} \rho^{-pk}r^{pk}} \\
&\leq & \sqrt{r^p\rho^p\frac{(1-a^2)^2}{1-a^2r^p\rho^p}}\, \sqrt{\frac{\rho^{-p}r^p}{1-\rho^{-p}r^{p}}}\\
&= &  \frac{r^p(1-a^2)}{\sqrt{1-a^2r^p\rho^p}}\,  \frac{1}{\sqrt{1-\rho^{-p}r^{p}}}.
\end{eqnarray*}

In the second and the third steps above we have used the classical Cauchy-Schwarz inequality and \eqref{KP2-eq3} with $R=\rho r$, respectively.
 Hence
\begin{equation} \label{Additional}
\sum_{k=1}^{\infty} |a_{pk+1}|r^{pk} \leq \frac{r^p(1-a^2)}{\sqrt{1-a^2r^p\rho^p}}\,  \frac{1}{\sqrt{1-\rho^{-p}r^{p}}}.
\end{equation}

We need to consider the cases $a \ge r^p$ and $a<r^p$ separately.

\noindent
\vspace{6pt}
{\bf Case 1: $a \ge r^p$.} In this case set $\rho=1/\sqrt[p]{a}$ and
obtain
\begin{equation}\label{KP2-eq4}
\sum_{k=0}^{\infty}|a_{pk+1}|r^{pk+1} \leq r \left(a+r^p\frac{(1-a^2)}{1-r^p a}\right).
\end{equation}
For convenience, we may let $\alpha =r^p$ and consider
$$\psi (x)=x+\alpha  \frac{(1-x^2)}{1- \alpha x}, \quad x\in [0,1].
$$
Finally, we just need to maximize $\psi (x)$ over the interval $[0,1]$. We see that $\psi$ has two critical points and obtain
that the maximum occurs when
$$x_1=\left (1-\frac{\sqrt{1-\alpha ^2}}{\sqrt{2}}\right )\frac{1}{\alpha }, \quad \alpha \ge \frac{1}{3},
$$
and thus, $\psi (x)\leq \psi (x_1)$. Consequently, by \eqref{KP2-eq4}, we find that
\begin{equation} \label{Finish1}
\sum_{k=0}^{\infty}|a_{pk+1}|r^{pk+1} \leq \frac{1}{r^{p-1}}\left(3-2 \sqrt{2}\sqrt{1-r^{2p}} \right)=1.
\end{equation}

\noindent
\vspace{6pt}
{\bf Case 2: $a < r^p$.} In this case we set $\rho=1/r$ and apply (\ref{Additional}). As a result we get
\begin{equation} \label{Finish2}
\sum_{k=0}^{\infty} |a_{pk+1}|r^{pk+1} \leq r(a+r^p \sqrt{1-a^2}/\sqrt{1-r^{2p}}) \leq 2r^{p+1} \leq 1.
\end{equation} Here we omitted the critical point $a=\sqrt{1-r^{2p}}$ because it is less than or equal to $r^p$ only in the case $r^{2p}>1/2$ which
contradicts Lemma \ref{KP2-lem1}.

The last inequality in \eqref{Finish2} follows from Lemma \ref{KP2-lem1}.

Now,  (\ref{Finish1}) and (\ref{Finish2}) finish the proof of the first part of Theorem \ref{KP2-th3}. Now we have to say a few words about extremal.
We set $f(z)=z(z^p-a)/(1-az^p)$ with
$a=\left (1-\frac{\sqrt{1-{r}^{2p}}}{\sqrt{2}}\right )/r^p$ and then calculate the Bohr radius for it. It coincides with $r$.

Certainly, an extremal function is unique up to a rotation of $a$. To see this we just trace our inequalities and see that the equality holds only when $|b_0|=a$.
\end{proof}

\begin{proof}[Proof of Theorem \ref{subtheo}]
Let $g\prec f$, where $g(z)=\sum_{n=1}^{\infty} b_nz^n$, and $f(z)=\sum_{k=1}^{\infty} a_{2k-1}z^{2k-1}$ is an odd univalent in $\ID$. Here $a_1=1$
and thus, by the definition of the subordination, $|b_1|\leq 1$. First we show that
\begin{equation}\label{KP2-eq6}
\sum_{k=1}^\infty |a_{2k-1}| r^{2k-1} \leq \frac{r}{1-r^2}\quad \mbox{ for $|z|=r<1$}.
\end{equation}
In order prove this, we use Robertson's inequality for odd univalent function $f$ (see for instance, \cite[Section 2.2]{AvWir-09}),
$$\sum_{k=1}^n |a_{2k-1}|^2  \leq n.
$$
Using this, we derive that
\begin{equation}\label{KP2-eq6a}
S_n = \sum_{k=1}^n |a_{2k-1}|  \leq \sqrt{n} \sqrt{\sum_{k=1}^n |a_{2k-1}|^2} \leq n.
\end{equation}
It follows from \eqref{KP2-eq6a} that
$$\sum_{k=1}^\infty |a_{2k-1}| r^{2k-1}=|a_1|(r-r^3)+\sum_{k=2}^\infty S_k(r^{2k-1}-r^{2k+1})
\leq r-r^3+\sum_{k=2}^\infty k(r^{2k-1}-r^{2k+1})=\frac{r}{1-r^2}
$$
which proves \eqref{KP2-eq6}.

Next, as $g\prec f$, we have by \eqref{KP2-eq6} that
$$\sum_{k=1}^\infty |b_k|^2r^{2k} \leq \sum_{k=1}^\infty |a_{2k-1}|^2r^{2(2k-1)} \leq \frac{r^2}{1-r^4}
$$
which gives
\begin{equation}\label{KP2-eq7}
\sum_{k=1}^\infty |b_k|^2r^{k}  \leq \frac{r}{1-r^2}.
\end{equation}
Consequently, by the classical Cauchy-Schwarz inequality, we obtain
$$\sum_{k=1}^\infty |b_k|r^{k}  \leq
\sqrt{\sum_{k=1}^\infty |b_k|^2r^{k}}\sqrt{\sum_{k=1}^\infty r^{k}} \leq \sqrt{\frac{r}{1-r^2}\frac{r}{1-r}}=\frac{r}{(1-r)\sqrt{1+r}}
$$
which is less than or equal to $1$ if $r^3-2r^2-r+1\geq 0$.
\end{proof}

\begin{remark}\label{1KP2-rem1}
{\rm
Now we show a way that can slightly improve the Bohr radius in Theorem \ref{subtheo}.
We see that
\begin{eqnarray*}
\sum_{k=1}^\infty |b_k|^2r^{k} &=& |b_1|r+|b_2|r^2+\sum_{k=3}^\infty |b_k|r^{k}\\
& \leq & |b_1|r+|b_2|r^2+\sqrt{\sum_{k=3}^\infty |b_k|^2r^{k}}\sqrt{\sum_{k=3}^\infty r^{k}}\\
& \leq & \psi (|b_1|,|b_2|),
\end{eqnarray*}
where we have used \eqref{KP2-eq7} and the function
\be\label{KP2-eq8}
\psi (x,y) =rx+r^2y+\frac{r^2}{\sqrt{1-r}}\sqrt{\frac{1}{1-r^2}-x^2-ry^2}
\ee
with $x=|b_1|$ and $y=|b_2|$. Therefore, we have to find
$$\max \{\psi (x,y):\, 0\leq x\leq 1, \,x^2+y \leq 1 \}.
$$

At first consider the case $y<1-x^2$. In this case
$$\frac{\partial}{\partial y}\psi (x,y)=0 \mbox{ or } y=0.
$$ 
If $y=0$, then $\max \{\psi (x,0):\, 0\leq x\leq 1\}=1121 \sqrt{7/53}/410 < 1$ for $r=0.59$ which is too big.
Now consider the case
$$
\frac{\partial}{\partial y}\psi (x,y)=r^2 - \frac{\sqrt{1/(1 - r)} r^3 y}{\sqrt{1/(1 - r^2) - x^2 - r y^2}}=0
$$ so that
$$
y=\frac{\sqrt{1 - x^2 + r^2 x^2}}{\sqrt{r + r^2}}>1-x^2 \mbox{ for } r \leq 0.6.
$$
Therefore, we may assume that $y=1-x^2$. In this case we have to verify that
 $$\max \{\psi (x,1-x^2) :\, 0\leq x \leq 1 \} \leq 1,
$$
where $\psi (x,1-x^2)$ is obtained from \eqref{KP2-eq8} by letting $y=1-x^2$.
Straightforward and routine computations show that $r=0.564 \ldots$ which is slightly better than the estimate presented in Theorem \ref{subtheo}.

We conclude that the Bohr radius in this case cannot be greater than $(\sqrt{5}-1)/2=0.618034 \ldots$. This upper bound can be easily
obtained from the example $f(z)=z/(1-z^2)$.
}
\end{remark}

\begin{remark}\label{KP2-rem2}
{\rm If $g\prec f$, where $g(z)=\sum_{k=1}^{\infty} b_{2k-1}z^{2k-1}$ is also odd, and $f(z)=\sum_{k=1}^{\infty} a_{2k-1}z^{2k-1}$
is an odd univalent in $\ID$ (with $a_1=1)$, then by Rogosinski's theorem \cite{Rogo43} and then  Robertson's inequality,
we obtain that
$$\sum_{k=1}^n |b_{2k-1}|^2\leq \sum_{k=1}^n |a_{2k-1}|^2 \leq n
$$
which, as in the proof of Theorem \ref{subtheo}, implies that
$$ \sum_{k=1}^\infty  |b_{2k-1}|r^{2k-1} \leq \frac{r}{1-r^2}.
$$
Note that $ \frac{r}{1-r^2} = 1$ gives $r=(\sqrt{5}-1)/2$ and thus, in this case, we have the sharp Bohr radius and the extremal function is $f(z)=z/(1-z^2)$.
}
\end{remark}

While determining the Bohr radius in the case of  functions $f$ analytic in the unit disk, one often requires sharp estimate on the
Taylor coefficients of  $f$. In the class of \textit{odd} univalent functions, sharp coefficient estimate is still unknown unlike for the class
of all univalent analytic functions solved by de Branges. In spite of this drawback, it is interesting to state the
following sharp result as a corollary to the relation \eqref{KP2-eq6}.

\begin{cor}\label{KP2-cor5}
If $f(z)=\sum_{k=0}^{\infty} a_{2k+1}z^{2k+1}$ is analytic in $\ID$ and univalent in $\ID$, where $0<\alpha =|a_1|\leq 1$, then
$$\sum_{k=0}^{\infty} |a_{2k+1}|r^{2k+1} \leq 1 \mbox{ for } r \leq r_{\alpha}=\frac{-\alpha +\sqrt{4+\alpha ^2}}{2}.
$$
The extremal function has the form $\alpha z/(1-z^2)$.
\end{cor}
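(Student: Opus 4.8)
The plan is to reduce this to the already-established inequality \eqref{KP2-eq6}, which was proved for normalized odd univalent functions with $a_1=1$. The key observation is that the hypothesis only assumes $0<\alpha=|a_1|\leq 1$, so I cannot apply \eqref{KP2-eq6} directly; instead I would first normalize. Given $f(z)=\sum_{k=0}^{\infty}a_{2k+1}z^{2k+1}$ univalent with $a_1\neq 0$, form the function $F(z)=f(z)/a_1=\sum_{k=0}^{\infty}(a_{2k+1}/a_1)z^{2k+1}$. Then $F$ is again odd and univalent in $\ID$ with leading coefficient $1$, so Robertson's inequality applies to $F$ exactly as in the proof of Theorem \ref{subtheo}. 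Following the computation that led to \eqref{KP2-eq6}, I obtain
$$\sum_{k=0}^{\infty}\left|\frac{a_{2k+1}}{a_1}\right|r^{2k+1}\leq \frac{r}{1-r^2},$$
and multiplying through by $\alpha=|a_1|$ gives the scaled bound
$$\sum_{k=0}^{\infty}|a_{2k+1}|r^{2k+1}\leq \frac{\alpha\, r}{1-r^2}.$$

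Next I would isolate the constant term more carefully, because the normalization has the effect of separating out the $a_1$ contribution. Writing the sum as $|a_1|r+\sum_{k=1}^{\infty}|a_{2k+1}|r^{2k+1}=\alpha r+\sum_{k=1}^{\infty}|a_{2k+1}|r^{2k+1}$ and applying \eqref{KP2-eq6} to $F$ only for the tail $k\geq 1$, I get
$$\sum_{k=0}^{\infty}|a_{2k+1}|r^{2k+1}\leq \alpha r+\alpha\left(\frac{r}{1-r^2}-r\right)=\alpha r+\alpha\,\frac{r^3}{1-r^2}.$$
The task then reduces to finding the largest $r$ for which the right-hand side is at most $1$. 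Setting $\alpha r+\alpha r^3/(1-r^2)=1$ and clearing the denominator yields a polynomial condition in $r$; I expect this to collapse to the quadratic $\alpha r^2+\alpha r-1\le 0$ after simplification, whose positive root is precisely $r_\alpha=(-\alpha+\sqrt{4+\alpha^2})/2$.

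For sharpness I would exhibit the extremal function $f(z)=\alpha z/(1-z^2)=\alpha\sum_{k=0}^{\infty}z^{2k+1}$, for which $|a_{2k+1}|=\alpha$ for all $k$, so that $\sum_{k=0}^{\infty}|a_{2k+1}|r^{2k+1}=\alpha r/(1-r^2)$; this equals $1$ exactly at $r=r_\alpha$, showing the radius cannot be increased, and I would check that $\alpha z/(1-z^2)$ is indeed univalent in $\ID$. The main obstacle I anticipate is purely bookkeeping: confirming that the algebraic simplification of the threshold equation truly reduces to the stated closed form $r_\alpha$, and making sure the normalization step legitimately transfers Robertson's inequality (which is stated for the normalized class $a_1=1$) to the given $f$ without losing sharpness. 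The analytic content is entirely contained in \eqref{KP2-eq6}; everything beyond it is algebra and verification of the extremal example.
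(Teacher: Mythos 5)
Your proposal follows the paper's proof exactly: scale \eqref{KP2-eq6} by $\alpha=|a_1|$ (the paper does this implicitly; you do it explicitly via $F=f/a_1$, which is legitimate since $F$ is odd, univalent and normalized), obtain $\sum_{k=0}^{\infty}|a_{2k+1}|r^{2k+1}\le \alpha r/(1-r^2)$, and determine when this is $\le 1$, with the same extremal function $\alpha z/(1-z^2)$; note that your intermediate step separating $\alpha r$ from the tail is redundant, since $\alpha r+\alpha r^3/(1-r^2)$ is identically $\alpha r/(1-r^2)$. One algebraic slip to fix: clearing the denominator in $\alpha r/(1-r^2)\le 1$ gives the monic quadratic $r^2+\alpha r-1\le 0$, not $\alpha r^2+\alpha r-1\le 0$, and it is the former whose positive root is $r_\alpha=\frac{-\alpha+\sqrt{4+\alpha^2}}{2}$ (your version would give $\frac{-\alpha+\sqrt{\alpha^2+4\alpha}}{2\alpha}$, which agrees only at $\alpha=1$).
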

\begin{proof}
By hypotheses, the relation \eqref{KP2-eq6} implies that for $|z|=r<1$,
$$\sum_{k=0}^\infty |a_{2k+1}| r^{2k+1} \leq \frac{\alpha r}{1-r^2}
$$
which is less than or equal to $1$ for $r\leq r_{\alpha}$. Observe that in the normalized case (i.e. $a_1 =1$), the radius $r_{\alpha}$ gives the value $(\sqrt{5}-1)/2\, =0.618034 \ldots$.
\end{proof}

\subsection*{Acknowledgements}
The research of the first author was supported by Russian foundation for basic research, Proj. 17-01-00282, and the research of the second author was supported
by the project RUS/RFBR/P-163 under Department of Science \& Technology (India).
The second author is currently on leave from the IIT Madras.

\end{document}